\newcommand{\bbH}{{\mathbb{H}}}
\newcommand{\bbC}{{\mathbb{C}}}
\newcommand{\bbP}{{\mathbb{P}}}
\newcommand{\bbZ}{{\mathbb{Z}}}
\renewcommand{\div}{{\mathrm{div}}}
\numberwithin{equation}{section}
\newtheorem{Lem}[equation]{Lemma}
\newtheorem{Thm}[equation] {Theorem}
\newtheorem{Cor}[equation]{Corollary}
\title
[On a simple model of $X_0(N)$]
{On a simple model of $X_0(N)$}
\author{Iva Kodrnja}
\address{
Faculty of Civil Engineering, 
University of Zagreb,
Ka\v ci\' ceva 26, 10000 Zagreb,
Croatia}
 \email{ikodrnja@grad.hr}
\thanks{The  author acknowledges Croatian Science Foundation grant no. 9364.}
\begin{document}
\begin{abstract}
	We find plane models for all $X_0(N)$, $N\geq 2$. We observe a map from the modular curve $X_0(N)$ to the projective plane constructed using modular forms of weight $12$ for the group $\Gamma_0(N)$; the Ramanujan function $\Delta$, $\Delta(N\cdot)$ and the third power of Eisestein series of weight $4$, $E_4^3$, and prove that this map is birational equivalence for every $N\geq 2$.
	The equation of the model is the minimal polynomial of $\Delta(N\cdot)/\Delta$ over $\bbC(j)$.
\end{abstract}
\subjclass[2000]{11F11, 11F23}
\keywords{modular forms,  modular curves, birational equivalence, modular polynomial}
\maketitle

\section{Introduction}

The recent paper \cite{Muic2} by Mui\' c presents a new method of finding defining equations for modular curves. As an application of the method one example was presented - a map from $X_0(N)$ to the projective plane defined by 
\begin{equation}\label{m1}\mathfrak{a}_z\mapsto (\Delta(z):E_4^3(z):\Delta(Nz)),
\end{equation}

where $$E_4(z)=1+240\sum\limits_{n=1}^\infty \sigma_3(n)q^n$$
is the usual Eisenstein series and
$$\Delta(z)=q+\sum\limits_{n=2}^\infty \tau(n)q^n$$ 
is the Ramanujan delta function.

The image of the map (\ref{m1}) is an irreducible projective curve which we denote by $\mathcal{C}_N$. In \cite{Muic2}, Lemma 5-4 and Lemma 5-5, it is proved that the curve $\mathcal{C}_p$ is birational to $X_0(p)$ for every prime number $p$ and a question was posed whether the map is always birational. 

We answer that question and prove that this is true for every number $N\geq 2$.

\begin{Thm}\label{L1}
The curve $\mathcal{C}_N$ is birational to $X_0(N)$ for every $N\geq 2$.	
\end{Thm}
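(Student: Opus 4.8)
Set $j=E_4^3/\Delta$, the usual Hauptmodul of $X(1)$, and $t(z)=\Delta(Nz)/\Delta(z)$. The function $t$ is a modular function for $\Gamma_0(N)$: for $\gamma=\big(\begin{smallmatrix}a&b\\Nc&d\end{smallmatrix}\big)\in\Gamma_0(N)$ one has $\Delta(N\gamma z)=(Ncz+d)^{12}\Delta(Nz)$ (write $N\gamma z=\gamma'(Nz)$ with $\gamma'=\big(\begin{smallmatrix}a&Nb\\c&d\end{smallmatrix}\big)\in\mathrm{SL}_2(\bbZ)$) and $\Delta(\gamma z)=(Ncz+d)^{12}\Delta(z)$, so the automorphy factors cancel and $t(\gamma z)=t(z)$. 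In affine coordinates $(X/Z,Y/Z)$ the map (\ref{m1}) is $\mathfrak{a}_z\mapsto(1/t(z),j(z)/t(z))$, so the function field of $\mathcal{C}_N$, pulled back to $X_0(N)$, is exactly $\bbC(j,t)\subseteq\bbC(X_0(N))$, and Theorem \ref{L1} amounts to the equality $\bbC(j,t)=\bbC(X_0(N))$. Since $-I\in\Gamma_0(N)$, the covering $X_0(N)\to X(1)$ has degree $\mu:=[\,\mathrm{SL}_2(\bbZ):\Gamma_0(N)\,]=N\prod_{p\mid N}(1+p^{-1})$, and by the correspondence between intermediate coverings $X_0(N)\to Y\to X(1)$ and groups $\Gamma_0(N)\subseteq\widetilde H\subseteq\mathrm{SL}_2(\bbZ)$, a function $f\in\bbC(X_0(N))$ generates $\bbC(X_0(N))$ over $\bbC(j)$ precisely when $f$ is not invariant under any subgroup of $\mathrm{SL}_2(\bbZ)$ strictly larger than $\Gamma_0(N)$. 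Thus it suffices to prove: the $\mu$ translates $t\circ\gamma$, as $\gamma$ runs over coset representatives of $\Gamma_0(N)\backslash\mathrm{SL}_2(\bbZ)$, are pairwise distinct as meromorphic functions on $\bbH$ (so that $t\circ\gamma=t$ forces $\gamma\in\Gamma_0(N)$).

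To compute these translates I use the classical Hecke bookkeeping. With $\delta_N=\big(\begin{smallmatrix}N&0\\0&1\end{smallmatrix}\big)$ one has $N\gamma z=\delta_N\gamma z$, and a direct check gives $\mathrm{SL}_2(\bbZ)\delta_N\gamma=\mathrm{SL}_2(\bbZ)\delta_N\gamma'$ if and only if $\Gamma_0(N)\gamma=\Gamma_0(N)\gamma'$ (the relevant group being $\mathrm{SL}_2(\bbZ)\cap\delta_N^{-1}\mathrm{SL}_2(\bbZ)\delta_N=\Gamma_0(N)$); moreover $\mathrm{SL}_2(\bbZ)\delta_N\,\mathrm{SL}_2(\bbZ)$ is the set of primitive integral matrices of determinant $N$, whose left $\mathrm{SL}_2(\bbZ)$-cosets are represented by the Hermite forms $\big(\begin{smallmatrix}a&b\\0&d\end{smallmatrix}\big)$ with $ad=N$, $0\le b<d$, $\gcd(a,b,d)=1$, of which there are exactly $\mu$. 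Cancelling the automorphy factor of $\gamma$ from numerator and denominator and using the compatibility of the weight-$12$ slash action with this coset decomposition, one obtains, for $\gamma$ in the coset labelled by $(a,b,d)$,
\[
t(\gamma z)=d^{-12}\,\frac{\Delta\!\left(\dfrac{az+b}{d}\right)}{\Delta(z)} .
\]

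It remains to see that these $\mu$ functions are pairwise distinct, which by the first paragraph finishes the proof. I compare leading Fourier coefficients: with $q=e^{2\pi i z}$ and $\Delta(w)=q_w\prod_{n\ge 1}(1-q_w^{\,n})^{24}$, $q_w=e^{2\pi i w}$, the substitution $w=(az+b)/d$ gives $q_w=e^{2\pi i b/d}q^{a/d}$, hence
\[
d^{-12}\,\Delta\!\left(\dfrac{az+b}{d}\right)=d^{-12}e^{2\pi i b/d}\,q^{a/d}\bigl(1+O(q^{a/d})\bigr),
\]
a Puiseux series in $q^{1/N}$ (note $a/d=a^2/N$) with lowest-order term $d^{-12}e^{2\pi i b/d}q^{a/d}$. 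If the series for $(a,b,d)$ and $(a',b',d')$ with $ad=a'd'=N$ coincide, then $a/d=a'/d'$, which together with $ad=a'd'$ forces $a=a'$ and $d=d'$; equality of leading coefficients then gives $e^{2\pi i b/d}=e^{2\pi i b'/d}$, hence $b=b'$ since $0\le b,b'<d$. Dividing by the common nonzero factor $\Delta(z)$ does not affect distinctness, so the $t\circ\gamma$ are pairwise distinct.

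The only non-formal ingredient is the identification of the translates of $t$ with the family $d^{-12}\Delta\bigl((az+b)/d\bigr)/\Delta(z)$, i.e. the bijection between $\Gamma_0(N)\backslash\mathrm{SL}_2(\bbZ)$ and the primitive Hermite forms of determinant $N$ together with the book-keeping of the weight-$12$ automorphy factors; once this is in place, the distinctness of the translates — which is exactly the content of birationality, and which would fail for a function factoring through a smaller level — drops out of a one-line comparison of leading Fourier coefficients. The point to watch is to stay with the primitive representatives $\gcd(a,b,d)=1$, since it is these, $\mu$ in number, that match the degree of $X_0(N)\to X(1)$; the imprimitive Hermite forms correspond to isogenies factoring through a proper quotient and do not occur among the translates of $t$.
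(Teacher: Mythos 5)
Your proof is correct, but it takes a genuinely different route from the paper. The paper follows Ishida's trick: it exhibits two functions in $\bbC(j,\Delta(N\cdot)/\Delta)$, namely $f_1=j$ and $f_2=j^{N-2}+(\Delta(N\cdot)/\Delta)^{N-1}$, computes the degrees of their polar divisors from the cusp data of $\Delta$ and $\Delta(N\cdot)$ (getting $\psi(N)$ and $(N-2)\psi(N)+1$), and concludes birationality because these are coprime, so the degree of the map divides $1$. You instead run the classical modular-polynomial argument: using the double coset $\mathrm{SL}_2(\bbZ)\,\mathrm{diag}(N,1)\,\mathrm{SL}_2(\bbZ)$ you identify the $\psi(N)$ translates of $t=\Delta(N\cdot)/\Delta$ under $\Gamma_0(N)\backslash\mathrm{SL}_2(\bbZ)$ with the functions $d^{-12}\Delta((az+b)/d)/\Delta(z)$ indexed by primitive Hermite triples, and separate them by their leading Puiseux coefficients $d^{-12}e^{2\pi i b/d}q^{a/d}$; since $t$ then has exactly $\psi(N)=[\bbC(X_0(N)):\bbC(j)]$ distinct conjugates over $\bbC(j)$, Galois theory of $\bbC(X(N))/\bbC(j)$ forces $\bbC(j,t)=\bbC(X_0(N))$. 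All the steps check out (the conjugation computation $\mathrm{SL}_2(\bbZ)\cap\delta_N^{-1}\mathrm{SL}_2(\bbZ)\delta_N=\Gamma_0(N)$, the automorphy bookkeeping giving the factor $d^{-12}$, and the recovery of $(a,b,d)$ from the leading term are all right). Your approach requires more machinery (the Galois correspondence and the coset/Hermite-form combinatorics) but gives more: it shows directly that the minimal polynomial of $t$ over $\bbC(j)$ has degree $\psi(N)$ and exhibits its roots, which is exactly the plane model $P_N$ the paper writes down afterwards. The paper's route is shorter given that the divisors of $\Delta$ and $\Delta(N\cdot)$ at the cusps were already computed in the reference, and it generalizes to other pairs of generators where explicit conjugates are harder to control; the one thing it needs that you avoid is the slightly ad hoc auxiliary function $f_2$ engineered to have pole degree $\equiv 1 \pmod{\psi(N)}$.
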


In the proof we use ideas from \cite{ishida}. 
\begin{Cor}
Modular functions $j$ and $\Delta(N\cdot)/\Delta$ generate $\bbC(X_0(N))$.
\end{Cor}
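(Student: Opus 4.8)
The plan is to read the Corollary off Theorem~\ref{L1}, by converting the birational equivalence furnished by (\ref{m1}) into an isomorphism of function fields and identifying the generators. The starting point is the classical identity $j=E_4^3/\Delta$ (for the normalisations used here, $\Delta=(E_4^3-E_6^2)/1728$ and $j=q^{-1}+744+\cdots$): thus the pairwise ratios of the three weight-$12$ forms $\Delta$, $E_4^3$, $\Delta(N\cdot)$ occurring in (\ref{m1}) are, up to inversion, exactly $j$ and $\Delta(N\cdot)/\Delta$.

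For the comparison, write $(X:Y:Z)$ for homogeneous coordinates on $\bbP^2$ chosen so that (\ref{m1}) reads $\mathfrak{a}_z\mapsto(X:Y:Z)=(\Delta(z):E_4^3(z):\Delta(Nz))$. A priori (\ref{m1}) is only a rational map from the smooth projective curve $X_0(N)$ to $\bbP^2$, but any such map extends to a morphism $\varphi$, whose image is by construction the irreducible projective curve $\calC_N$. By Theorem~\ref{L1}, $\varphi\colon X_0(N)\to\calC_N$ is birational, so $\varphi^\ast\colon\bbC(\calC_N)\xrightarrow{\sim}\bbC(X_0(N))$. Since $\Delta$ does not vanish on $\bbH$, every point of $\varphi(\bbH)$ has nonzero first coordinate, whence $\calC_N\not\subset\{X=0\}$ and $\bbC(\calC_N)=\bbC(Y/X,\,Z/X)$. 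As $\varphi^\ast(Y/X)=E_4^3/\Delta=j$ and $\varphi^\ast(Z/X)=\Delta(N\cdot)/\Delta$, this gives
\[
\bbC(X_0(N))=\bbC\bigl(j,\ \Delta(N\cdot)/\Delta\bigr),
\]
which is the assertion.

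I do not expect a genuine obstacle here: the entire content already lies in Theorem~\ref{L1}, and the only points meriting a word are that (\ref{m1}) extends to a morphism of curves and that one works in a coordinate chart meeting $\calC_N$ — both immediate from the non-vanishing of $\Delta$ and $\Delta(N\cdot)$ on $\bbH$. As a consistency check matching the abstract, $[\bbC(X_0(N)):\bbC(j)]$ equals the index $[\Gamma_0(1):\Gamma_0(N)]$, so $\Delta(N\cdot)/\Delta$ satisfies over $\bbC(j)$ an irreducible polynomial of precisely that degree, namely the defining equation of $\calC_N$; one could alternatively establish the Corollary directly in this way, but the argument via Theorem~\ref{L1} is shorter.
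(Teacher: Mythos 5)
Your proof is correct and follows the same route the paper intends: the Corollary is the immediate translation of Theorem \ref{L1} via the observation in Section \ref{maps} that $\bbC(\calC(f,g,h))$ is identified with $\bbC(g/f,h/f)\subseteq\bbC(X(\Gamma))$, so birationality is equivalent to $g/f=E_4^3/\Delta=j$ and $h/f=\Delta(N\cdot)/\Delta$ generating $\bbC(X_0(N))$. Your extra care about extending the rational map to a morphism and choosing a chart meeting $\calC_N$ is sound but not a departure from the paper's argument.
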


This result gives us simple models for all $X_0(N)$. Some equations can be found in Section \ref{pro}. We must mention that, although simpler than the classical modular equation, these models have the same deficiencies - the degrees are big (equal $\psi(N)$) and the coefficients of equations are rather large. As computed in \cite{comp}, the explicit bound for the logarithmic height of the modular polynomial of prime level $l$ is $$6l\log(l)+18l.$$
It would be interesting to compute the height of our polynomials, and we believe that the bound would be very close to this one.

The classical modular equation, the minimal polynomial of $j(N\cdot)$ over $\bbC(j)$ is very hard to compute (\cite{BKS}) and so are other modular polynomials for different modular functions (\S 7 of \cite{BKS} or \cite{rac}) and this is also the case for our polynomials.

Computing the defining equations of modular curves is an interesting problem in the theory of modular curves. We would like to mention the paper \cite{yy} by Young which provides the simplest equations for $X_0(N)$ and as well for $X(N)$ and $X_1(N)$. There are also interesting papers by Ishida and Ishii , \cite{ishida}, \cite{ii} , where the authors find generators for function fields of $X(N)$ and $X_1(N)$. We use their argument in proving Theorem \ref{L1}. 

The method we use for finding models is developed by Mui\'c in \cite{Muic1},\cite{MuMi} and \cite{Muic2}.



I would like to thank G. Mui\'c for introducing me into this very interesting subject and for many useful conversations and advices.

\section{Maps to projective plane}\label{maps}


Let $\Gamma$ be a Fuchsian group of first order.
The quotient space of the complex upper half-plane $\bbH$ by the action of $\Gamma$ is a Riemann surface which we will denote $X(\Gamma)$. This set can be compactified by adding orbits of cusps of $\Gamma$. For $\Gamma=\Gamma_0(N)$, this compact Riemann surface is denoted by $X_0(N)$ and called modular curve.

The main idea of \cite{Muic2} is to map $X(\Gamma)$ to the projective plane $\bbP^2$ using modular forms. It is achieved in the following way:
 
Select $k\geq 2$ such that $\dim M_k(\Gamma)\geq 3$. Take three linearly independent modular forms $f$, $g$ and $h$ in $M_k(\Gamma)$ and construct the map $X(\Gamma)\mapsto \bbP^2$ by defining it on the complement of points in $X(\Gamma)$ which are orbits of common zeros of $f$, $g$ and $h$ by
\begin{equation}\label{map}
\mathfrak{a}_z\mapsto(f(z):g(z):h(z)).
\end{equation}   
The map defined in this way is uniquely determined holomorphic map from the Riemann surface $X(\Gamma)$ to $\bbP^2$. It is actually a rational (in fact regular because the domain is compact) map
$$\mathfrak{a}_z\mapsto(1:g(z)/f(z):h(z)/f(z)).$$

The image is an irreducible projective curve which we denote by $\mathcal{C}(f,g,h)$, whose degree is less or equal to $\dim M_k(\Gamma)+g(\Gamma)-1$. This bound for $\deg(\mathcal{C}(f,g,h))$ equals the degree of integral divisors attached to modular forms $f$,$g$ and $h$ (see \cite{Muic2}, Lemma 2-2 (vi)) and can be shown by calculating the number of points in the intersection of $\mathcal{C}(f,g,h)$ with a line in general position.

The degree of the map (\ref{map}) is the degree of the field extension 
$$\bbC(\mathcal{C}(f,g,h))\subset \bbC(X(\Gamma)),$$
and we denote it by $d(f,g,h)$. 

The field of rational functions $\bbC(\mathcal{C}(f,g,h))$ of the image curve is isomorphic to a subfield of $\bbC(X(\Gamma))$ generated over $\bbC$ by $g/f$ and $h/f$. Therefore, the map (\ref{map}) is birational equivalence if and only if $g/f$ and $h/f$ generate $\bbC(X(\Gamma))$.

In \cite{Muic2}, the following formula for the degree of the image curve $\mathcal{C}(f,g,h)$ was proved:
\begin{Thm}\label{formula}
Assume that $k\geq 2$ is an integer such that $\dim(M_k(\Gamma))\geq 3$. Let $f$, $g$, $h\in M_k(\Gamma)$ be three linearly independent modular forms. Then, we have the following:
$$d(f,g,h)\deg\mathcal{C}(f,g,h)=\dim(M_k(\Gamma))+g(\Gamma)-1-\sum\limits_{\mathfrak{a}\in X(\Gamma)}\min(\mathfrak{c}'_f(\mathfrak{a}),\mathfrak{c}'_g(\mathfrak{a}),\mathfrak{c}'_h(\mathfrak{a})),$$ 
where $\mathfrak{c}'_f$,$\mathfrak{c}'_g$ and $\mathfrak{c}'_h$ are integral divisors attached to modular forms $f$,$g$ and $h$.
\end{Thm}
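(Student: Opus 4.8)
The plan is to compute $d(f,g,h)\deg\mathcal{C}(f,g,h)$ as the degree of the pullback of a line under the morphism $\phi\colon X(\Gamma)\to\bbP^2$, $\mathfrak{a}_z\mapsto(f(z):g(z):h(z))$, and then to identify that pullback with the divisor of a modular form. Write $\mathcal{C}=\mathcal{C}(f,g,h)$, $d=d(f,g,h)$, and let $L\subset\bbP^2$ be a line. Since $\phi\colon X(\Gamma)\to\mathcal{C}$ is a finite morphism of degree $d$ and $\deg\mathcal{C}$ is by definition the degree of the restriction of $\mathcal{O}(1)$ to $\mathcal{C}$, multiplicativity of degrees under a finite map gives
\begin{equation*}
\deg\phi^{*}L=d\cdot\deg\mathcal{C}
\end{equation*}
for every $L$; for $L$ in general position $\phi^{*}L$ is moreover reduced, so this degree is literally the number of points of $\phi^{-1}(L)$, namely the $\deg\mathcal{C}$ points of $\mathcal{C}\cap L$ each lifted to its $d$ distinct preimages. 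The task is therefore to compute $\deg\phi^{*}L$ directly from the forms $f,g,h$.

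First I would identify the pullback divisor. The line $L$ is cut out by a linear form $aX+bY+cZ$, and its pullback along $\phi$ is governed by the zeros of $af+bg+ch\in M_k(\Gamma)$; the subtlety is the base locus of the linear system spanned by $f,g,h$. At a common zero $\mathfrak{a}$ the morphism $\phi$ is defined only after dividing $f,g,h$ by their shared order of vanishing, so writing $B=\sum_{\mathfrak{a}\in X(\Gamma)}\min(\mathfrak{c}'_f(\mathfrak{a}),\mathfrak{c}'_g(\mathfrak{a}),\mathfrak{c}'_h(\mathfrak{a}))\,\mathfrak{a}$ for the base divisor, I would prove the identity of divisors
\begin{equation*}
\phi^{*}L=\mathfrak{c}'_{af+bg+ch}-B,
\end{equation*}
valid whenever $af+bg+ch\neq0$. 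This is where the integral divisor attached to a modular form is indispensable: $\mathfrak{c}'_{af+bg+ch}$ records the zeros of $af+bg+ch$ with exactly the same normalization at the cusps and elliptic points as $\mathfrak{c}'_f,\mathfrak{c}'_g,\mathfrak{c}'_h$, so the three are directly comparable, $B$ is genuinely their common part, and subtracting $B$ is precisely the passage to the line bundle that defines the morphism.

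Taking degrees then finishes the proof. As $af+bg+ch$ is a nonzero weight-$k$ form, its integral divisor has the degree common to all such forms, $\deg\mathfrak{c}'_{af+bg+ch}=\dim M_k(\Gamma)+g(\Gamma)-1$ by Lemma 2-2(vi) of \cite{Muic2}, while $\deg B=\sum_{\mathfrak{a}\in X(\Gamma)}\min(\mathfrak{c}'_f(\mathfrak{a}),\mathfrak{c}'_g(\mathfrak{a}),\mathfrak{c}'_h(\mathfrak{a}))$ by construction. Combining the two displays yields
\begin{equation*}
d(f,g,h)\deg\mathcal{C}(f,g,h)=\dim M_k(\Gamma)+g(\Gamma)-1-\sum_{\mathfrak{a}\in X(\Gamma)}\min(\mathfrak{c}'_f(\mathfrak{a}),\mathfrak{c}'_g(\mathfrak{a}),\mathfrak{c}'_h(\mathfrak{a})),
\end{equation*}
which is the asserted formula.

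The main obstacle I anticipate is the divisor identity $\phi^{*}L=\mathfrak{c}'_{af+bg+ch}-B$ together with the general-position claims behind the first display. One must check that $\phi$ really is the morphism attached to the linear system $\langle f,g,h\rangle$ with base locus exactly $B$, and that the integral-divisor normalization at cusps and elliptic points is compatible with pullback along $\phi$ — in particular that at each base point $\mathfrak{a}$ the order $\min(\mathfrak{c}'_f(\mathfrak{a}),\mathfrak{c}'_g(\mathfrak{a}),\mathfrak{c}'_h(\mathfrak{a}))$ is the correct amount to remove, even where $\mathfrak{a}$ is elliptic or a cusp. For the literal point count one additionally needs a Bertini-type statement: the lines $L$ for which $\phi^{*}L$ fails to be reduced, or which pass through the image of the ramification of $\phi$, form a proper closed subset of the dual plane, so a generic $L$ avoids them. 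Reconciling the transcendental description of $\mathfrak{c}'_f$ at the special points with the algebro-geometric pullback is the technical heart of the argument.
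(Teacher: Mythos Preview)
The paper does not prove Theorem~\ref{formula}; it is quoted verbatim from Mui\'c's paper \cite{Muic2} (the sentence immediately preceding the statement reads ``In \cite{Muic2}, the following formula for the degree of the image curve $\mathcal{C}(f,g,h)$ was proved:''). So there is no in-paper proof to compare your proposal against.

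That said, your outline is the natural one and is, in spirit, how the result is established in \cite{Muic2}: one realizes $\phi$ as the morphism attached to the sublinear system $\langle f,g,h\rangle$ of $|\mathfrak{c}'_f|$, strips the base locus $B$, and then reads off $d\cdot\deg\mathcal{C}$ as the degree of the base-point-free part, which is $\deg\mathfrak{c}'_f-\deg B$. Your identification $\deg\mathfrak{c}'_f=\dim M_k(\Gamma)+g(\Gamma)-1$ via Lemma~2--2(vi) of \cite{Muic2} is exactly the input used there. The technical caveats you flag---compatibility of the integral-divisor normalization at elliptic points and cusps with the algebro-geometric pullback, and the Bertini-type genericity for the line $L$---are genuine and are precisely what \cite{Muic2} handles; you have correctly located where the work lies, but a self-contained write-up would need to carry those verifications out rather than list them as obstacles.
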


\section{Proof of Theorem \ref{L1}}\label{pro}

For a non-constant function $f\in \bbC(X(\Gamma))$, the degree of the subfield generated by $f$ equals the degree of the divisor of poles of $f$ (see \cite{Miranda}, \S 6) which we will denote by $$d(f)=\deg(\div_\infty(f))=[\bbC(X(\Gamma)):\bbC(f)].$$


Returning to the map (\ref{map}) we have an easy condition for birational equivalence: 

\begin{Lem}\label{L2}
	The map (\ref{map}) is a birational equivalence if $$\gcd(d(g/f),d(h/f))=1.$$\end{Lem}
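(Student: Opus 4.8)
The plan is to exploit multiplicativity of degrees in a tower of function fields. Write $u=g/f$ and $v=h/f$, regarded as elements of $\bbC(X(\Gamma))$. Since $f$, $g$, $h$ are linearly independent, neither $u$ nor $v$ can be constant (if, say, $g/f$ were a constant $c$, then $g-cf=0$), so $d(g/f)$ and $d(h/f)$ are well-defined positive integers, equal respectively to $[\bbC(X(\Gamma)):\bbC(u)]$ and $[\bbC(X(\Gamma)):\bbC(v)]$ by the formula $d(\,\cdot\,)=\deg(\div_\infty(\,\cdot\,))=[\bbC(X(\Gamma)):\bbC(\,\cdot\,)]$ recalled above.

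Next I would invoke the identification, noted in Section \ref{maps}, that $\bbC(\mathcal{C}(f,g,h))\cong\bbC(u,v)$ as a subfield of $\bbC(X(\Gamma))$, so that the degree of the map (\ref{map}) is $d(f,g,h)=[\bbC(X(\Gamma)):\bbC(u,v)]$. The key observation is the chain of subfields
$$\bbC(u)\subseteq\bbC(u,v)\subseteq\bbC(X(\Gamma)).$$
All three are finitely generated of transcendence degree one over $\bbC$, and the outer extension is finite because $u$ is non-constant; hence by the tower law $[\bbC(X(\Gamma)):\bbC(u,v)]$ divides $[\bbC(X(\Gamma)):\bbC(u)]=d(g/f)$. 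Running the symmetric argument through $\bbC(v)\subseteq\bbC(u,v)$ shows it also divides $d(h/f)$.

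Therefore $d(f,g,h)$ divides $\gcd(d(g/f),d(h/f))$. Under the hypothesis this gcd equals $1$, which forces $d(f,g,h)=1$, i.e. $\bbC(u,v)=\bbC(X(\Gamma))$; by the criterion in Section \ref{maps} this says precisely that (\ref{map}) is a birational equivalence.

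I do not expect any real obstacle in this lemma: it is a direct application of the tower law together with the description of $\bbC(\mathcal{C}(f,g,h))$ already established. The only point deserving a word of care is the non-constancy of $u$ and $v$, which is immediate from linear independence of $f$, $g$, $h$. The substantive difficulty of the paper lies elsewhere — in checking the hypothesis $\gcd(d(g/f),d(h/f))=1$ for the specific triple $(\Delta,\,E_4^3,\,\Delta(N\cdot))$ — and it is there that the ideas from \cite{ishida} are brought to bear.
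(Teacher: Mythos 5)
Your proof is correct and follows essentially the same route as the paper: the paper establishes the more general Lemma~\ref{L3} by exactly this tower-law divisibility argument, of which Lemma~\ref{L2} is the special case $f_1=g/f$, $f_2=h/f$. The only addition is your explicit check that $g/f$ and $h/f$ are non-constant via linear independence, which the paper leaves implicit.
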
 

The converse is not true. For the map (\ref{m1}) when $N>2$ the degrees of divisors of poles of $j$ and $\Delta(N\cdot)/\Delta$ are always divisible by $2$.

But we can look at other functions in $\bbC(\mathcal{C}(f,g,h))$. This is an argument which is used in \cite{ishida} (see Lemma 2) to find generators of function fields of modular curves $X(N)$ and $X_1(N)$. 

\begin{Lem}\label{L3}
	If there are two non-constant functions $f_1$ and $f_2$ in $\bbC(g/f,h/f)$ such that $\gcd(d(f_1),d(f_2))=1$, then the map (\ref{map}) is a birational equivalence.	
	
\end{Lem}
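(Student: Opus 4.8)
The plan is to show that the existence of $f_1,f_2\in\bbC(g/f,h/f)$ with coprime pole-degrees forces $\bbC(g/f,h/f)=\bbC(X(\Gamma))$, which by the discussion preceding Lemma~\ref{L2} is exactly the statement that the map (\ref{map}) is a birational equivalence. Write $K=\bbC(g/f,h/f)\subseteq\bbC(X(\Gamma))=:L$; both are function fields of curves over $\bbC$, and $K$ is the function field of the image curve $\mathcal{C}(f,g,h)$. The degree of the map is $d(f,g,h)=[L:K]$, a positive integer, and the goal is to prove $[L:K]=1$.

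First I would observe that $f_1$ and $f_2$, viewed as elements of $L$, have pole divisors of degree $d(f_1)$ and $d(f_2)$ respectively when computed on $X(\Gamma)$, because they are in fact in $\bbC(X(\Gamma))$ and the paper's convention $d(\cdot)=\deg(\div_\infty(\cdot))=[\bbC(X(\Gamma)):\bbC(\cdot)]$ is the degree over $X(\Gamma)$. The key step is then the multiplicativity of field degrees: for any non-constant $\phi\in K$, we have $[L:\bbC(\phi)]=[L:K][K:\bbC(\phi)]$, so $[L:K]$ divides $[L:\bbC(\phi)]=d(\phi)$. Applying this with $\phi=f_1$ and $\phi=f_2$ shows $[L:K]\mid d(f_1)$ and $[L:K]\mid d(f_2)$, hence $[L:K]\mid\gcd(d(f_1),d(f_2))=1$, so $[L:K]=1$ and $K=L$.

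The only genuine subtlety — and the step I would be most careful about — is making sure that $d(f_i)$ as defined in the paper (a quantity intrinsic to $X(\Gamma)$) really does equal $[L:\bbC(f_i)]$ rather than $[K:\bbC(f_i)]$; this is immediate from the stated definition $d(f)=[\bbC(X(\Gamma)):\bbC(f)]$, but it must be invoked explicitly, since the whole argument hinges on computing pole degrees on the big curve $X(\Gamma)$ and not on the possibly-singular image curve. One should also note that $f_1,f_2$ are assumed non-constant precisely so that $\bbC(f_i)$ is a genuine rational subfield of transcendence degree one and the degree $d(f_i)$ is finite and positive; then the divisibility argument is valid. No further input is needed: Lemma~\ref{L2} itself is the special case $f_1=g/f$, $f_2=h/f$, and this lemma is the natural generalization exploited in \cite{ishida}.
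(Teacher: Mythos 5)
Your proposal is correct and follows exactly the paper's argument: from the tower $\bbC(f_i)\subseteq\bbC(g/f,h/f)\subseteq\bbC(X(\Gamma))$ and multiplicativity of field degrees, the degree $d(f,g,h)$ divides both $d(f_1)$ and $d(f_2)$, hence equals $1$. Your extra care in checking that $d(f_i)$ is computed as $[\bbC(X(\Gamma)):\bbC(f_i)]$ rather than over the image curve is a worthwhile clarification but does not change the route.
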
  
\begin{proof}
	Since $f_1\in \bbC(g/f,h/f)$ we have a sequence
	$$\bbC(f_1)\subseteq \bbC(g/f,h/f)\subseteq \bbC(X(\Gamma)),$$
	
	and from this we see that $d(f,g,h)$ divides $d(f_1)$. 
	
	If $\gcd(d(f_1),d(f_2))=1$, then $d(f,g,h)=1$.
\end{proof}

Now we can prove Theorem \ref{L1}.

\begin{proof}
We look at the following two non-constant functions in $\bbC\left(j,\Delta(N\cdot)/\Delta\right)$:
$$f_1=j \qquad \text{and}\qquad f_2=j^{N-2}+\left(\frac{\Delta(N\cdot)}{\Delta}\right)^{N-1}.$$

We compute $d(f_1)$ and $d(f_2)$ and show that these numbers are relatively prime.

First, we need the divisors of modular forms $\Delta$, $\Delta(N\cdot)$ and $E_4^3$. They are calculated in \cite{Muic2}, Lemma 4-3. For our purpose, it is important that $E_4^3$ has zeros in the $\Gamma_0(N)$-orbits of $(1+\sqrt{(-3)})/2$ and that $\Delta$ and $\Delta(N\cdot)$ have zeros at cusps of $\Gamma_0(N)$ so supports of their divisors are disjoint.

The full set of representatives of cusps of $\Gamma_0(N)$ is the set of rational numbers $c/d$ where $d$ is a positive divisor of $N$, $\gcd(c,d)=1$ and there are $\varphi(\gcd(d,N/d))$ representatives with denominator $d$ ($\varphi$ denotes the Euler function).

Divisors of $\Delta$ and $\Delta(N\cdot)$ can also be easily calculated using the formula for the order of an $\eta-$quotient at the cusp $c/d$ (see \cite{ligozat}). Here are their divisors:
 
\begin{align*}
&\div(\Delta)=\sum\limits_{\substack{d|N \\ 1\leq d\leq N}} \frac{N}{d}\frac{1}{\gcd(d,N/d)}\mathfrak{a}_{c/d}\\
&\div(\Delta(N\cdot))=\sum\limits_{\substack{d|N \\ 1\leq d\leq N}} \frac{d}{\gcd(d,N/d)}\mathfrak{a}_{c/d}.
\end{align*}

Now, the divisor of poles of $f_1$ is minus the divisor of $\Delta$ and its degree is

\begin{equation}\label{dj}
d(f_1)=\sum\limits_{\substack{d|N \\ 1\leq d\leq N}} \frac{N}{d}\frac{\varphi(\gcd(d,N/d))}{\gcd(d,N/d)}=\psi(N),
\end{equation} 

where $\psi(N)$ is the Dedekind psi function, $\psi(N)=[SL_2(\bbZ):\Gamma_0(N)]$ is the degree of all divisors of modular forms of weight $12$ on $\Gamma_0(N)$.

Let us compute the divisor of $f_2$. 
Function $\Delta(N\cdot)/\Delta$ has poles at the cusps $c/d$ for $d-N/d<0$, that is for $d<\sqrt{N}$. We have 

\begin{equation*}
\div_\infty(\Delta(N\cdot)/\Delta)=\sum\limits_{\substack{d|N \\ 1\leq d\leq \sqrt{N}}} \frac{N/d-d}{\gcd(d,N/d)}\mathfrak{a}_{c/d}.
\end{equation*}
  


The function $j^{N-2}$ has poles at all cusps of $\Gamma_0(N)$ and we conclude that $f_2$ has poles at all cusps. In the cusps $c/d$ where $d\geq \sqrt{N}$ the order of pole equals the order of pole of $j^{N-2}$ whereas in the cusps $c/d$ for $d<\sqrt{N}$ the order of pole is the greater of orders of poles of  $j^{N-2}$ and $(\Delta(N\cdot)/\Delta)^{N-1}$. Hence we have 
\begin{align}\label{e34}
d(f_2)=&\sum\limits_{\substack{d|N \\ 1\leq d\leq \sqrt{N}}} \frac{\varphi(\gcd(d,N/d))}{\gcd(d,N/d)}\max{\left(\frac{N}{d}(N-2),\left(\frac{N}{d}-d\right)(N-1)\right)}\\
& + \sum\limits_{\substack{d|N \\ d\geq \sqrt{N}}} \frac{\varphi(\gcd(d,N/d))}{\gcd(d,N/d)}\frac{N}{d}(N-2).\nonumber
\end{align}

The maximum appearing in formula (\ref{e34}) equals $\frac{N}{d}(N-2)$ for $d>1$ and for $d=1$ the maximum is $(N-1)^2=N(N-2)+1$ and we have 

\begin{equation}\label{e35}
d(f_2)=N(N-2)+1+ \sum\limits_{\substack{d|N \\ 1<d\leq N}} \frac{\varphi(\gcd(d,N/d))}{\gcd(d,N/d)}\frac{N}{d}(N-2)=(N-2)d(f_1)+1.
\end{equation}

Since $\gcd(d(f_1),d(f_2))=1$, Lemma \ref{L3} implies that $d_N=1$ and we have proved Theorem \ref{L1}. 

\end{proof}



The degree of the map (\ref{m1}) is one and the degree of $\mathcal{C}_N$ can be computed via Theorem \ref{formula}. We have:
$$\deg \mathcal{C}_N=\dim(M_{12}(\Gamma_0(N)))+g(\Gamma_0(N))-1=\psi(N).$$

We present few equations (the polynomial $P_N$ is the minimal polynomial of $\Delta(N\cdot)/\Delta$ over $\bbC(j)$, $P_N(j,\Delta(N\cdot)/\Delta)=0$):
\begin{flushleft}
\begin{align*}
P_2(x,y)=&\newcommand{\Bold}[1]{\mathbf{#1}}16777216  y^{3} - x y + 196608 y^{2} + 768  y + 1\\
P_3(x,y)=&\newcommand{\Bold}[1]{\mathbf{#1}}150094635296999121 y^{4} - x^{2} y + 38263752 \, x y^{2} - 213516729579636 \, y^{3} + 1512 \, x y \\ &+ 10589493366 \, y^{2} - 177876 \, y + 1\\
P_4(x,y)=&\newcommand{\Bold}[1]{\mathbf{#1}}324518553658426726783156020576256 \, y^{6} - 4096 \, x^{3} y^{2} + 6597069766656 \, x^{2} y^{3}\\ & - 2490310449950789468160 \, x y^{4} + 193118646128519322884263378944 \, y^{5} - x^{3} y \\&+ 1620049920 \, x^{2} y^{2} - 569986827839078400 \, x y^{3} + 38322004008487170909143040 \, y^{4} \\ &+ 2256 \, x^{2} y + 9349606932480 \, x y^{2} + 2538589037956201185280 \, y^{3} - 1105920 \, x y \\&+ 557658553712640 \, y^{2} + 40894464 \, y + 1\\
P_5(x,y)=&\newcommand{\Bold}[1]{\mathbf{#1}}867361737988403547205962240695953369140625 \, y^{6} - x^{4} y + 29296875000 \, x^{3} y^{2}\\ & - 246763229370117187500 \, x^{2} y^{3} + 547152012586593627929687500000 \, x y^{4}\\& - 85798035343032097443938255310058593750 \, y^{5} + 3000 \, x^{3} y + 1243896484375000 \, x^{2} y^{2} \\ &+ 12913942337036132812500000 \, x y^{3} + 2829028744599781930446624755859375 \, y^{4}\\& - 2587500 \, x^{2} y + 1322387695312500000 \, x y^{2} - 31095165759325027465820312500 \, y^{3}\\& + 587500000 \, x y + 29664516448974609375 \, y^{2} - 9433593750 \, y + 1\\
\end{align*}
\end{flushleft}









\begin{thebibliography}{999999}

\bibitem{rac} {\sc I.~Blake, J.~A.~Csirik, M.~Rubinstein, G.~Seroussi,}{\em On the computation of modular polynomials for elliptic curves}, Thec. Report, Hewlett-Packard Laboratories, (1999), \url{http://www.math.uwaterloo.ca/~mrubinst/publications/publications.html}

\bibitem {comp} {\sc R.~Br\" oker,  A.~V.~Sutherland,} {\em An explicit height bound for the classical modular polynomial}, Ramanujan Journal {\bf 22} (2010), 293--313.

\bibitem{BKS} {\sc R.~Br\" oker, K.~Lauter, A.~V.~Sutherland,}{\em    Modular polynomials via isogeny volcanoes,} 
Mathematics of Computation {\bf 81} (2012), 1201--1231.







\bibitem{ishida}
{\sc N.~Ishida,} {\em Generators and equations for modular function fields of principal congruence subgroups,} Acta Arithmetica, {\bf 85} (1998) no 3, 197--207.

\bibitem{ii}
{\sc N.~Ishida, N.~Ishii},{\em Generators and defining equations of the modular function field of the group $\Gamma_1(N)$,} Acta Arithmetica, {\bf 101} (2002) no 4, 303--320.

\bibitem{ligozat} {\sc G.~Ligozat,} {\it Courbes modulaires de genre $1$}, Bull. Soc. Math. France[Memoire 43] (1972), 1-80.

\bibitem{Miyake}
{\sc T.~Miyake,} {\em Modular forms,} Springer-Verlag (2006).


\bibitem{Miranda} {\sc R.~Miranda,}{\em Algebraic Curves and Riemann Surfaces,} 
Graduate Studies in Mathematics {\bf 5} (1995).

\bibitem{Muic} {\sc G.~Mui\' c}, 
{\em Modular curves and bases for the spaces of cuspidal modular forms,}
Ramanujan J. {\bf 27} (2012), 181–-208.


\bibitem{Muic1} {\sc G.~Mui\' c,} {\em On embeddings of curves in projective spaces,} 
Monatsh. Math. {\bf  Vol. 173, No. 2} (2014), 239--256.

\bibitem{MuMi} {\sc G.~Mui\' c, D.~Miko\v c,} { Birational maps of $X(1)$ into $\mathbb P^2$,} Glasnik Matematicki {\bf Vol. 48}, No. 2 (2013), 301--312.

\bibitem{Muic2} {\sc G.~Mui\' c,} {\em On degrees and birationality of the maps $X_0(N)\rightarrow \mathbb P^2$ constructed via modular forms, } 
Monatsh. Math. {\bf  Vol. 180, No. 3} (2016), 607--629.


\bibitem{shi} {\sc G.~Shimura,} {\em  Introduction to the arithmetic theory of automorphic functions. Kanô Memorial 
Lectures,} No. 1. Publications of the Mathematical Society of Japan, No. 11. Iwanami Shoten, Publishers, Tokyo; Princeton 
University Press, Princeton, N.J., 1971. 







\bibitem{yy} {\sc Y.~Yifan,} {\em Defining equations of modular curves,} Advances in Mathematics {\bf 204} (2006) 481-–508.






\end{thebibliography}
\end{document}